\newfont{\footsc}{cmcsc10 at 8truept}
\newfont{\footbf}{cmbx10 at 8truept}
\newfont{\footrm}{cmr10 at 10truept}
\newcommand{\half}{{\textstyle\frac12}}
\newcommand{\ignore}[1]{}
\newcommand {\Z}  {{{\sf Z}\hspace{-1.3mm}{\sf Z}}}
\newtheorem{theorem}{Theorem}
\newtheorem{lemma}{Lemma}
\newtheorem{proposition}{Proposition}
\newenvironment{proof}{\begin{trivlist}
                       \item[]{\bf Proof}
                       \hspace{0cm} }{\hfill {\large $\bullet$}
                       \end{trivlist}}
\newenvironment{acknowledgment}{\begin{trivlist}
                         \item[]{\bf Acknowledgment} \\
                        }{\end{trivlist}}
\newenvironment{remark}{\begin{trivlist}
                         \item[]{\bf Remark} \\
                        }{\end{trivlist}}
\def\quad{ ~ }
\def\qquad{ ~~ }
\def\sp{\def\baselinestretch{1.2}\large\normalsize}
\def\dsp{\def\baselinestretch{1.8}\large\normalsize}
\title{A prime sensitive Hankel determinant of \\
Jacobi symbol enumerators}  
\author{\"Omer E\u{g}ecio\u{g}lu \\
Department of Computer Science, \\
 University of California,\\
 Santa Barbara CA 93106 \\   
 ({\tt omer@cs.ucsb.edu})
 }
\begin{document}

\date{}

\maketitle

\sp

\begin{abstract}

We show that the determinant of a Hankel matrix of odd dimension $n$ whose entries are 
the enumerators of the Jacobi symbols which depend on the row and the column indices 
vanishes iff $n$ is composite.
If the dimension is a prime $p$, then the determinant
evaluates to a polynomial of degree
$p-1$ which is the product of a power of $p$ and
the generating polynomial
of the partial sums of Legendre symbols. The sign of the 
determinant is determined by the quadratic character of
$-1$ modulo $p$.

The proof of the evaluation 
makes use of elementary properties of Legendre symbols,  quadratic Gauss sums and 
orthogonality of trigonometric functions.\\

\noindent
{\bf Keywords:} Determinant, prime, Legendre symbol, Jacobi symbol, Gauss sum. \\

\noindent
{\bf AMS MSC2000:} 11C20, 15A36, 11T24 
\end{abstract}

\section{Introduction}

For an odd integer $n$, and $ k =1, 2, \ldots , n $ define the polynomials
$$
a_k (x) = \sum_{m=0}^k J({k-m},{n}) x^m
$$
in which $J({a},{m}) $ is the Jacobi symbol defined for odd integers $m$ by
$$
J({a},{m}) = 
\Big( \frac{a}{p_1} \Big)^{e_1} 
\Big( \frac{a}{p_2} \Big)^{e_2} 
\cdots
\Big( \frac{a}{p_k} \Big)^{e_k} 
$$
where the prime factorization of $m$ is
$m= p_1^{e_1} p_2^{e_2} \cdots p_k^{e_k}$
and
for a prime $p$, 
$\Big( \frac{a}{p} \Big)$ is the Legendre symbol defined by
$$
 \Big( \frac{a}{p} \Big) = 
\left\{
\begin{array}{rl}
0 & \mbox{if} ~ p \, | ~ a ,\\
1 & \mbox{if} ~a~ \mbox{is a quadratic residue mod } p , \\
-1 & \mbox{if} ~a~ \mbox{is a quadratic nonresidue mod } p ~. 
\end{array}
\right.
$$

For example when $ n =3$,
the first five polynomials are
\begin{eqnarray*}
 a_1(x) &=& 1 \\
 a_2(x) &=& x-1 \\
 a_3(x) &=& x^2-x \\
 a_4(x) &=& x^3-x^2+1 \\
 a_5(x) &=& x^4-x^3+x-1
\end{eqnarray*}
It is easy to see that $a_k(x)$ is a monic polynomial of 
degree $k-1$ and $a_k(0) = J(k,n)$. 
Consider the  $n \times n$ Hankel determinant
\begin{equation}
\label{Hn}
H_n(x) = \det [ a_{i+j-1} (x)]_{1 \leq i,j \leq n} ~.
\end{equation}

As an example, 
$$
H_3 (x) = \det \left[
\begin{array}{lll}
 1 & x-1 & x^2-x \\
 x-1 & x^2-x & x^3-x^2+1 \\
 x^2-x & x^3-x^2+1 & x^4-x^3+x-1
\end{array}
\right] 
= - x^2 ~.
$$
A few other determinant evaluations for small $n$ are as follows:
\begin{eqnarray*}
H_5(x) &=& 5 x^2 (x-1)  (x+1) \\
H_7(x) &=& -49 x^2 (x^4+2 x^3+x^2+2 x+1) \\
H_9(x) &=& 0 \\
H_{11}(x) & = & -14641 x^2 (x^8+x^6+2 x^5+3 x^4+2 x^3+x^2+1) \\
H_{13} (x) &=& 371293 x^2 (x-1) (x+1) (x^8+2 x^6+2 x^5+3 x^4+2 x^3+2 x^2+1) \\
H_{15} (x) &=&  0\\
H_{17} (x) &=& 410338673 x^2 (x-1) (x+1) \\
& & \hspace*{5mm} (x^{12}+2 x^{11}+2 x^{10}+4 x^9+3 x^8+4 x^7+2 x^6+4 x^5+3 x^4+4 x^3+2 x^2+2 x+1)  \\
H_{19}(x) &=& -16983563041 x^2 \\
& & \hspace*{5mm} (x^{16}-x^{14}+x^{12}+2 x^{11}+3 x^{10}+2 x^9+3 x^8+2 x^7+3 x^6+2 x^5+x^4-x^2+1) 
\end{eqnarray*}

Recently, Chapman \cite{Chapman03} evaluated Hankel determinants of 
certain $ \frac{p-1}{2} \times \frac{p-1}{2}$ dimensional 
0-1 matrices built up from the Legendre symbol defined modulo a prime $p$. These evaluations
give  
$$
\det \left[ \half \left( 1+ \left( \frac{i+j-1}{p} \right) \right) \right]_{1 \leq i,j \leq \frac{p-1}{2}}
=
\det \left[ \half \left( 1- \left( \frac{i+j-1}{p} \right) \right) \right]_{1 \leq i,j \leq \frac{p-1}{2}} ~=~-1
$$
for any prime $p>3$, $p \equiv 3 \!\! \pmod{4}$.
\cite{Chapman03} also includes additional conjectures related to such determinants.
In this paper, we prove the following 
evaluation of 
the Hankel determinant
$H_n(x)$: 
\begin{theorem}
\label{thm}
$H_n(x)$ identically vanishes unless $ n = p$ is a prime. For $p$ prime, 
$$
H_p (x) = (-1)^{\frac{p-1}{2}} p^{\frac{p-3}{2}}
\sum_{k=0}^{p-1} b_k x^k
$$
where
\begin{equation}
\label{bk}
 b_k =  \sum_{i=1}^{p-k} \left( \frac{ i}{p} \right) 
 ~.
\end{equation}
Furthermore, $ H_p(x) $ is divisible in $ \Z [x]$ by $ x^2$ for $ p  \equiv 3 \!\! \pmod{4}$ and 
by $ x^2 (x^2-1)$ for $ p  \equiv 1 \!\! \pmod{4}$. 
\end{theorem}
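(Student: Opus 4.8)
The plan is to separate the arithmetic content (the Jacobi symbols) from the analytic content (the variable $x$) by means of the finite Fourier transform, turning $H_n(x)$ into the determinant of a diagonal matrix perturbed by a single rank-one term. First I would record the recurrence $a_{k+1}(x)=x\,a_k(x)+J(k{+}1,n)$ with $a_0(x)=0$, which is immediate from the definition and gives the closed form $a_k(x)=\sum_{\ell=0}^{k}J(\ell,n)\,x^{k-\ell}$. Writing $\chi(\ell)=J(\ell,n)$ and using that $\chi$ is periodic modulo $n$, I would substitute the inversion formula $\chi(\ell)=\frac1n\sum_{r=0}^{n-1}\hat\chi(r)\,\omega^{-r\ell}$, where $\omega=e^{2\pi i/n}$ is a primitive $n$-th root of unity and $\hat\chi(r)=\sum_{m=0}^{n-1}\chi(m)\omega^{rm}$, into $a_{i+j-1}(x)$ and sum the resulting geometric series in $\ell$. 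Setting $\zeta_r=\omega^{-r}$ and $\beta_r=\hat\chi(r)/\bigl(n(\zeta_r-x)\bigr)$, this collapses the $(i,j)$ entry to $\sum_r \beta_r\zeta_r^{\,i}\zeta_r^{\,j}-\bigl(\sum_r\beta_r\bigr)x^i x^j$, i.e.\ to the factorization
\[
[a_{i+j-1}(x)]_{1\le i,j\le n}=U\,D\,U^{\top}-\gamma\,v\,v^{\top},
\]
where $U=[\zeta_r^{\,i}]$ is an invertible Vandermonde-type matrix in the $n$-th roots of unity, $D=\mathrm{diag}(\beta_0,\dots,\beta_{n-1})$, $v=(x,x^2,\dots,x^n)^{\top}$, and $\gamma=\sum_r\beta_r$. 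Discovering this decomposition is, I expect, the main obstacle; the rest is computation.

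Since $\det U\ne0$, the determinant identity for a diagonal-plus-rank-one matrix gives
\[
H_n(x)=(\det U)^2\Bigl(\prod_r\beta_r-\gamma\sum_s w_s^2\prod_{r\ne s}\beta_r\Bigr),\qquad w=U^{-1}v.
\]
As $\beta_r\equiv0$ precisely when $\hat\chi(r)=0$, this vanishes identically the moment two of the $\hat\chi(r)$ vanish, since then $\prod_r\beta_r\equiv0$ and every $\prod_{r\ne s}\beta_r\equiv0$. So the dichotomy is reduced to counting zeros of the Gauss sum $\hat\chi$. Here I would factor $\hat\chi(r)$ through the Chinese Remainder Theorem and the multiplicativity of $J(\cdot,n)$ into local Gauss sums at each prime power $q^{a}\,\|\,n$, and evaluate each local factor: it is nonzero for exactly $N=q-1$ residues when $a$ is odd and for $N=q$ residues when $a$ is even. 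Hence the number of $r$ with $\hat\chi(r)\ne0$ is $\prod_i N_i\le\phi(n)$, and the number of vanishing $\hat\chi(r)$ is $n-\prod_i N_i\ge n-\phi(n)$, which equals $1$ when $n=p$ is prime but is $\ge2$ for every composite $n$. This proves $H_n(x)\equiv0$ unless $n$ is prime.

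For $n=p$ prime only $\hat\chi(0)=\sum_\ell(\ell/p)=0$ vanishes, so the single term $s=0$ survives and $H_p(x)=(\det U)^2(-\gamma)\,w_0^2\prod_{r=1}^{p-1}\beta_r$. Each factor is then explicit through the quadratic Gauss sum $\hat\chi(r)=(r/p)\,G$ with $G^2=(-1)^{(p-1)/2}p$, together with the root-of-unity identities $\prod_{r\ne0}(\zeta_r-x)=1+x+\cdots+x^{p-1}$ and $(\det U)^2=(-1)^{(p-1)/2}p^{p}$, and the value $w_0=\frac{x}{p}(1+x+\cdots+x^{p-1})$ obtained by inverting $U$ via orthogonality of the characters. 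Multiplying these out, the powers of $p$ collapse to $p^{(p-3)/2}$ and the sign to $(-1)^{(p-1)/2}$, leaving $H_p(x)=(-1)^{(p-1)/2}p^{(p-3)/2}\,(-\gamma)\,x^2(1+x+\cdots+x^{p-1})$. Finally, expanding $-\gamma=\sum_{m\ge1}(m/p)\,x^{-m-1}$ (again from $\hat\chi(r)=(r/p)G$ and $G^2=\pm p$) and reading off the coefficient of $x^k$ in the product produces exactly $b_k=\sum_{i=1}^{p-k}(i/p)$, which is the asserted formula.

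For the divisibility, I would observe that $b_0=b_1=0$ because the complete and the punctured sums of Legendre symbols vanish, so $x^2\mid H_p(x)$ for every prime $p$. The extra factor $x^2-1$ for $p\equiv1\pmod4$ amounts to $Q(1)=Q(-1)=0$, where $Q(x)=\sum_k b_k x^k$; a short manipulation converts these into $\sum_{i=1}^{p-1}i\,(i/p)=0$ and $\sum_{i\ \mathrm{even}}(i/p)=0$, and both follow from the symmetry $(i/p)=((p-i)/p)$, which holds precisely when $(-1/p)=1$, i.e.\ $p\equiv1\pmod4$. The vanishing criterion and the divisibility are thus comparatively routine once the rank-one decomposition and the Gauss-sum evaluations are in hand.
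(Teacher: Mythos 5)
Your proof is correct, and it takes a genuinely different route from the paper's. The paper treats the two halves of the theorem with separate tools: for composite $n$ it exhibits explicit linear dependences among the rows of the Hankel matrix (one argument when $n$ is a perfect square, another when $n=p^{2e+1}q$), while for prime $p$ it row-reduces $H_p(x)$ to $\det {\bf C}_p \sum_j a_j(x)$ --- using the weighted matrix-tree theorem to show all cofactors of ${\bf A}_p$ coincide --- and then evaluates $\det {\bf C}_p=(-1)^{\frac{p-1}{2}}p^{\frac{p-3}{2}}$ spectrally, building a full eigenbasis of ${\bf B}_p=\bigl[\bigl(\frac{i-j}{p}\bigr)\bigr]$ from sine vectors, differences of cosine vectors, and two residue-indicator vectors, via Gauss sums and trigonometric orthogonality. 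Your factorization $[a_{i+j-1}(x)]=UDU^{\top}-\gamma vv^{\top}$ replaces all of this by one mechanism: the determinant formula for a diagonal-plus-rank-one matrix makes $H_n(x)$ vanish identically as soon as two Fourier coefficients $\hat\chi(r)$ vanish, and the local count of nonvanishing coefficients ($q-1$ at an odd prime power, $q$ at an even one, multiplied through the CRT) gives exactly one zero when $n$ is prime and at least $n-\phi(n)\ge 2$ zeros when $n$ is composite; the prime evaluation then falls out of the same formula from the single surviving term $s=0$. I verified the bookkeeping: $(\det U)^2=(-1)^{\frac{p-1}{2}}p^p$, $w_0=\frac{x}{p}(1+x+\cdots+x^{p-1})$, $\prod_{r\neq 0}\beta_r=p^{-\frac{p-1}{2}}(1+x+\cdots+x^{p-1})^{-1}$ and $-\gamma=\sum_{m\ge 1}\chi(m)x^{-m-1}$ do combine to $(-1)^{\frac{p-1}{2}}p^{\frac{p-3}{2}}\sum_k b_kx^k$, negative powers of $x$ cancelling because complete sums of $\chi$ over a period vanish. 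What your route buys is uniformity --- no case split on the shape of $n$, no matrix-tree lemma, no trigonometric identities --- and a conceptual explanation of why primality is the dividing line (it is exactly the condition that at most one Gauss coefficient vanishes); what the paper's route buys is elementarity, staying inside real linear algebra with explicit row operations and eigenvectors. The closest relative of your argument in the paper is the referee's alternate proof of Lemma 5, which diagonalizes the circulant $\bigl[\bigl(\frac{i-j}{p}\bigr)\bigr]$ by the vectors $(1,\zeta^r,\ldots,\zeta^{(p-1)r})$; you push that idea further by building the variable $x$ into the decomposition from the start, so the same diagonalization also produces the polynomial factor $\sum_k b_kx^k$. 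Your divisibility argument at the end coincides with the paper's. In a final write-up you should spell out two steps you pass over quickly: that $N_i\le\phi(q_i^{a_i})$ in every case (so $\prod_i N_i\le\phi(n)$, with equality possible) together with $n-\phi(n)\ge n/q\ge 2$ for odd composite $n$ with least prime factor $q$, and that the rank-one determinant identity is being applied over the field of rational functions in $x$, where vanishing of a $\beta_r$ means identical vanishing.
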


The properties of the Jacobi and Legendre symbols and Gauss sums that we make use of
in the proof of Theorem \ref{thm}
can readily be found in most books on number theory: we mention only \cite{BS66}, \cite{HR}, \cite{IR}.

\section{The proof of Theorem \ref{thm}}

We divide the proof of the theorem into a series of lemmas, and start with
recording the following trivial property of the polynomials $a_k(x)$:
\begin{lemma}
\label{stupidlemma}
$$
a_{k+1} ( x) = J( {k+1},{n})  + x a_k (x)  ~.
$$
\end{lemma}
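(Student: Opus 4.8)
The plan is to prove this directly from the defining sum, since the asserted recurrence is nothing more than a re-indexing of a single summation. First I would write out, straight from the definition, $a_{k+1}(x) = \sum_{m=0}^{k+1} J(k+1-m,n)\,x^m$, and peel off the $m=0$ term. That term contributes $J(k+1-0,n)\,x^0 = J(k+1,n)$, which is exactly the additive constant appearing on the right-hand side of the claim. What remains is the tail sum over $m = 1, 2, \ldots, k+1$.

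Next I would shift the summation index in the tail by setting $m = m'+1$, so that $m'$ ranges over $0, 1, \ldots, k$ and every power $x^m = x^{m'+1}$ carries a common factor of $x$. Pulling that factor out of the sum gives $x\sum_{m'=0}^{k} J(k-m',n)\,x^{m'}$, and the sum that survives is verbatim the definition of $a_k(x)$. Combining the isolated constant with this term yields $a_{k+1}(x) = J(k+1,n) + x\,a_k(x)$, as desired.

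I do not expect any genuine obstacle: the statement is a bookkeeping identity, and the author's own description of it as ``trivial'' is apt. The single point worth checking is the alignment of the two index ranges at the top end, where the leading term $J(0,n)\,x^{k+1}$ of $a_{k+1}(x)$ must match $x$ times the leading term $J(0,n)\,x^{k}$ of $a_k(x)$; because it does, the recurrence holds as a formal identity of polynomials in $\Z[x]$, without invoking any special value such as $J(1,n)=1$ or $J(0,n)=0$. Its role, presumably, is to furnish the elementary column relation that will later drive the row and column reductions on the Hankel matrix $H_n(x)$.
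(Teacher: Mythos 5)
Your proof is correct and is exactly the routine index-shift verification the paper has in mind; indeed, the paper states this lemma as a ``trivial property'' and records it without any proof at all. Nothing to add.
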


\subsection{The composite case}

Now we show that 
$H_n(x) \equiv 0 $ iff $n$ is composite, and then determine the structure of 
$H_p(x)  $ for $ p $ prime.

\begin{lemma}
$H_n(x)$ identically vanishes for  $ n $  composite.
\end{lemma}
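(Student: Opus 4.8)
The plan is to use the recurrence of Lemma \ref{stupidlemma} to strip the variable $x$ out of all but the first row, so that the determinant becomes (up to sign) that of a matrix whose lower $n-1$ rows are built purely from Jacobi symbols, and then to show that those rows are linearly dependent whenever $n$ is composite. First I would apply the elementary row operations $R_i \to R_i - x R_{i-1}$ for $i = n, n-1, \ldots, 2$, performed in that order so that each step still uses the unmodified row $R_{i-1}$. By Lemma \ref{stupidlemma}, the $(i,j)$ entry $a_{i+j-1}(x)$ of the new row $R_i$ becomes $a_{i+j-1}(x) - x\,a_{i+j-2}(x) = J(i+j-1,n)$, a constant, while $R_1$ is left untouched. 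Since these operations preserve the determinant, $H_n(x)$ equals the determinant of the matrix $M$ whose first row is $(a_1(x), \ldots, a_n(x))$ and whose rows $i = 2, \ldots, n$ have entries $J(i+j-1,n)$.

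Writing $\chi(a) = J(a,n)$ and using its periodicity $\chi(a+n) = \chi(a)$, each lower row $(\chi(i), \chi(i+1), \ldots, \chi(i+n-1))$ is a cyclic shift of the single vector $(\chi(0), \chi(1), \ldots, \chi(n-1))$. Hence the bottom $n-1$ rows of $M$ are $n-1$ rows of the $n \times n$ circulant matrix $C$ generated by $\chi$. To force $\det M = 0$ it therefore suffices to prove that these rows are linearly dependent, and for that it is enough to show $\mbox{rank}\, C \le n-2$. Being a circulant, $C$ is diagonalized by the Fourier matrix, so its rank equals the number of nonzero eigenvalues $\tau_k = \sum_{a=0}^{n-1}\chi(a)\,\omega^{ak}$, with $\omega = e^{2\pi i/n}$, which are precisely the Gauss sums attached to $\chi$.

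The remaining, genuinely number-theoretic, step is to verify that at least two of the $\tau_k$ vanish when $n$ is composite, and I expect this to be the main obstacle. I would split into two cases. If $n$ is not squarefree, then $\chi$ already has period $\mbox{rad}(n) < n$, where $\mbox{rad}(n)$ is the product of the distinct primes dividing $n$; consequently columns $j$ and $j+\mbox{rad}(n)$ of $M$ coincide throughout the lower block, so that block has at most $\mbox{rad}(n) \le n/3 < n-1$ distinct columns and is rank-deficient outright, with no appeal to the $\tau_k$ needed. If $n$ is squarefree and composite, then $\chi$ is a primitive nonprincipal character modulo $n$, and the separability relation $\tau_k = \overline{\chi(k)}\,\tau_1$ holds for every $k$ together with the nonvanishing $\tau_1 \ne 0$ (indeed $|\tau_1| = \sqrt{n}$); hence $\tau_k = 0$ exactly when $\gcd(k,n) > 1$, and there are $n - \phi(n) \ge 2$ such $k$, since a composite $n$ has at least two non-units ($0$ and any proper prime divisor) among $0, 1, \ldots, n-1$.

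In both cases $\mbox{rank}\, C \le n-2$, so the lower block of $M$ has dependent rows and $H_n(x)$ vanishes identically. The delicate ingredient is the nonvanishing and the exact zero-pattern of the Gauss sums $\tau_k$ in the squarefree case; the cleanest route to it is the classical evaluation $|\tau_1| = \sqrt{n}$ for primitive characters together with the multiplicativity of $\chi$ across the prime factors of $n$, which at bottom rests on the nonvanishing of the quadratic Gauss sum. This is exactly the machinery the paper brings to bear in the prime case, so reusing it here keeps the two halves of the argument in harmony.
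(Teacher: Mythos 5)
Your proof is correct, but it follows a genuinely different route from the paper's. Both start from Lemma \ref{stupidlemma}, yet the paper never performs your full row reduction in the composite case: it directly exhibits explicit linear dependencies among the rows ${\bf r}_i$ of the original Hankel matrix, with a case split on whether $n$ is a perfect square. For $n=m^2$ it verifies ${\bf r}_2-x{\bf r}_1={\bf r}_{m+2}-x{\bf r}_{m+1}$ (both sides are the same 0-1 vector, since $J(a,m^2)$ depends only on $\gcd(a,m)$), and for $n=p^{2e+1}q$ with $p\nmid q$ it verifies $\sum_{i=0}^{p-1}({\bf r}_{iq+2}-x{\bf r}_{iq+1})={\bf 0}$, which needs nothing deeper than multiplicativity of the Jacobi symbol and $\sum_{i=1}^{p-1}\left(\frac{i}{p}\right)=0$. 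You instead extend the reduction (\ref{reduced}) (used by the paper only for prime $n$) to all odd $n$, view the lower $(n-1)\times n$ block as rows of the circulant generated by $\chi=J(\cdot,n)$, and count zero eigenvalues: your case split is non-squarefree (periodicity of $\chi$ modulo $\mathrm{rad}(n)$ collapses columns, no Gauss sums needed) versus squarefree composite (primitivity of $\chi$, separability $\tau_k=\chi(k)\tau_1$, and $|\tau_1|=\sqrt{n}\neq 0$ force $\tau_k=0$ exactly for the $n-\phi(n)\ge 2$ indices $k$ with $\gcd(k,n)>1$). The trade-off: the paper's argument is elementary and self-contained, while yours imports the theory of primitive characters and the nonvanishing of their Gauss sums; in return yours is more conceptual, since it shows the circulant has rank exactly $\phi(n)$ in the squarefree case, so deleting a single row can leave independent rows only when $n-\phi(n)\le 1$, i.e., only when $n$ is prime, and it dovetails nicely with the referee's circulant-based proof of Lemma \ref{lemma5} sketched in the paper. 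One small wording slip: your closing claim that ``in both cases $\mathrm{rank}\,C\le n-2$'' is not literally what Case A establishes (there you bound the rank of the lower block by its number of distinct columns, not the rank of $C$ itself), but the conclusion actually needed --- that the $n-1$ lower rows are linearly dependent --- is correctly established in both cases, so this is cosmetic.
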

\begin{proof}
Let ${\bf r}_i = ( a_i, a_{i+1}, \ldots , a_{i+n-1} )$
denote the $i$-th row of the matrix in (\ref{Hn}). Let $ {\bf e}_i $ denote the
$n$-dimensional unit row vector with 1 in the $i$-th coordinate and 0 elsewhere, with
$ {\bf e}_i^t $ denoting its transpose.
The proof is in two cases depending on whether or not $n$ is a perfect square: \\

\noindent
{\bf Case I:} $n = m^2$ is a perfect square.\\

We claim that in this case the four rows 
${\bf r}_1 ,{\bf r}_2 ,{\bf r}_{m+1} , {\bf r}_{m+2}$ are linearly dependent. More precisely
$$
{\bf r}_2 -x {\bf r}_1 = {\bf r}_{m+2} - x {\bf r}_{m+1}
 ~.
$$
From Lemma \ref{stupidlemma}, 
\begin{equation}
\label{one}
{\bf r}_2 -x {\bf r}_1 = \sum_{i=1}^{m^2}
 J( {i+1},{m^2} )  {\bf e}_i
\end{equation}
and
\begin{equation}
\label{two}
{\bf r}_{m+2} -x {\bf r}_{m+1} = \sum_{i=1}^{m^2}
 J( {i+m+1},{m^2} )  {\bf e}_i
 ~.
\end{equation}
Note that
$$
 J( {a},{m^2} ) = 
 J( {a},{m} ) 
 J( {a},{m} ) = 
\left\{
\begin{array}{ll}
0 & \mbox{if} ~~~gcd(a,m) > 1, \\
1 & \mbox{if} ~~~gcd(a,m) = 1 ~.
\end{array}
\right.
$$
Since
$$
 gcd({i+1},{m} )  =
 gcd({i+m+1},{m}) 
 ~,
$$
the right hand sides of  (\ref{one}) and (\ref{two}) evaluate to the identical 0-1 vector. \\

\noindent
{\bf Case II:} $n = p^{2e+1} q$ with $p$ prime,  $ p \not  | ~q$. \\

Let $m = p^{2e+1}$. In this case we show that the following linear dependence among the rows holds:
$$
\sum_{i=0}^{p-1} 
({\bf r}_{iq+2} - x {\bf r}_{iq+1})  ~= {\bf 0}
 ~.
$$
By Lemma \ref{stupidlemma} the $j$-th entry of the vector on the left is
\begin{eqnarray*}
\sum_{i=0}^{p-1} 
 J( {iq+j+1}, {m q} )  
&=& J( {j+1},{ q} )  \sum_{i=0}^{p-1} J( {iq+j+1},{m } )  \\
&=& J( {j+1},{ q} )  \sum_{i=0}^{p-1} J( {iq+j+1},{p } )  \\
&=& J( {j+1},{ q} )  \sum_{i=1}^{p-1} \left( \frac{i}{p } \right) =0  ~.
\end{eqnarray*}
\end{proof}

\subsection{The prime case}

Let now $ n= p $ be prime.
Using Lemma \ref{stupidlemma} and 
replacing ${\bf r}_{i+1}$ by ${\bf r}_{i+1} - x {\bf r}_i$ for $i = 1, 2, \ldots, p$, we obtain
\dsp
\begin{equation}
\label{reduced}
H_p (x) = \det 
\left[
\begin{array}{ccccc}
a_1(x) & a_2(x) & \cdots &  a_{p-1} (x) & a_p(x) \\
\left( \frac{2}{p} \right) & \left( \frac{3}{p} \right) & \cdots & \left( \frac{p}{p} \right) & \left( \frac{p+1}{p} \right) \\
\left( \frac{3}{p} \right) & \left( \frac{4}{p} \right) & \cdots & \left( \frac{p+1}{p} \right) & \left( \frac{p+2}{p} \right) \\
\vdots & \vdots & \cdots & \vdots & \vdots \\
\left( \frac{p}{p} \right) & \left( \frac{p+1}{p} \right) & \cdots & \left( \frac{2p-2}{p} \right) & \left( \frac{2p-1}{p} \right) 
\end{array}
\right] ~.
\end{equation}
\sp

Since $a_k(x)$ is of degree $k -1$, $H_p (x)$ is a polynomial of degree 
$p-1$. 

Consider the $p \times p $ matrix
\dsp
\begin{equation}
\label{general}
{\bf A}_p = 
\left[
\begin{array}{cccc}
\left( \frac{1}{p} \right) & \left( \frac{2}{p} \right) & \cdots & \left( \frac{p}{p} \right)  \\
\left( \frac{2}{p} \right) & \left( \frac{3}{p} \right) & \cdots & \left( \frac{p+1}{p} \right)  \\
\vdots & \vdots & \cdots  & \vdots \\
\left( \frac{p}{p} \right) & \left( \frac{p+1}{p} \right) & \cdots & \left( \frac{2p-1}{p} \right)  
\end{array}
\right] = \left[ \left( \frac{i+j-1}{p} \right) \right]_{1\leq i,j \leq p}
\end{equation}
\sp

Let $c_{i,j}$ denote the cofactor of the entry $(i, j )$ of ${\bf A}_p$. Expanding the determinant in 
(\ref{reduced})
by the first row, we have 
$$
H_p (x) = \sum_{j=1}^p  c_{1,j} a_j(x)
$$
and 
the coefficient of the leading term is
the cofactor 
$ c_{1,p} = \det {\bf C}_p $ where
$ {\bf C}_p$ is the $ (p-1) \times (p-1)$ matrix
\dsp
\begin{equation}
\label{lterm}
{\bf C}_p = 
\left[
\begin{array}{cccc}
\left( \frac{2}{p} \right) & \left( \frac{3}{p} \right) & \cdots & \left( \frac{p}{p} \right)  \\
\left( \frac{3}{p} \right) & \left( \frac{4}{p} \right) & \cdots & \left( \frac{p+1}{p} \right)  \\
\vdots & \vdots & \cdots  & \vdots \\
\left( \frac{p}{p} \right) & \left( \frac{p+1}{p} \right) & \cdots & \left( \frac{2p-2}{p} \right)  
\end{array}
\right] = \left[ \left( \frac{i+j}{p} \right) \right]_{1\leq i,j \leq p-1}
\end{equation}
\sp

First we show that the $c_{1,j}$'s, and in fact all cofactors of ${\bf A}_p$ are identical. 
\begin{lemma}
\label{cofactor}
All cofactors of the matrix  ${\bf A}_p$ are identical.
\end{lemma}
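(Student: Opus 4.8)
The plan is to exploit a single structural feature of ${\bf A}_p$: every one of its rows, and by symmetry every one of its columns, sums to zero. Indeed, the $i$-th row of ${\bf A}_p$ consists of the values $\left(\frac{i}{p}\right), \left(\frac{i+1}{p}\right), \ldots, \left(\frac{i+p-1}{p}\right)$, whose arguments $i, i+1, \ldots, i+p-1$ form a complete residue system modulo $p$. Since the Legendre symbol is periodic modulo $p$ and $\sum_{k=1}^{p}\left(\frac{k}{p}\right)=0$ (there are equally many quadratic residues and nonresidues, and $\left(\frac{p}{p}\right)=0$), each row sum vanishes. As ${\bf A}_p$ is a symmetric Hankel matrix, each column sum vanishes as well. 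Writing ${\bf 1}$ for the all-ones column vector, this says ${\bf A}_p {\bf 1} = {\bf 0}$ and ${\bf 1}^t {\bf A}_p = {\bf 0}$; in particular ${\bf A}_p$ is singular, so $\det {\bf A}_p = 0$.

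Next I would pass to the adjugate matrix ${\bf B} = \mathrm{adj}({\bf A}_p)$, whose $(i,j)$ entry is the cofactor $c_{j,i}$. The defining identities ${\bf A}_p {\bf B} = {\bf B}{\bf A}_p = (\det {\bf A}_p)\,{\bf I} = {\bf 0}$ say that every column of ${\bf B}$ lies in the right kernel of ${\bf A}_p$ and every row of ${\bf B}$ lies in its left kernel. The argument then splits according to the rank of ${\bf A}_p$.

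If ${\bf A}_p$ has rank at most $p-2$, then every $(p-1)\times(p-1)$ minor vanishes, so every cofactor is zero and the cofactors are trivially all identical. Otherwise ${\bf A}_p$ has rank exactly $p-1$, so its right kernel is one-dimensional; since ${\bf A}_p{\bf 1}={\bf 0}$, that kernel is spanned by ${\bf 1}$, and likewise the left kernel is spanned by ${\bf 1}^t$ (using that ${\bf A}_p$ is symmetric). Hence every column of ${\bf B}$ is a scalar multiple of ${\bf 1}$ and every row of ${\bf B}$ is a scalar multiple of ${\bf 1}^t$, which forces ${\bf B} = c\,{\bf J}$ for a single scalar $c$, where ${\bf J}$ is the all-ones matrix. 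Thus all entries of ${\bf B}$, that is, all cofactors of ${\bf A}_p$, coincide, and the lemma follows.

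The only real obstacle is the case analysis on the rank: the clean conclusion that ${\bf B}$ is a constant multiple of ${\bf J}$ needs the kernel to be exactly one-dimensional, which is not automatic from the vanishing of the row and column sums alone. I expect this to be harmless here, since the subsequent computation of $H_p(x)$ exhibits a nonzero cofactor (the leading coefficient $c_{1,p}=\det{\bf C}_p$), retroactively confirming that the rank is $p-1$; but the two-case formulation above keeps the present lemma self-contained and independent of that later evaluation.
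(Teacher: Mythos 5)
Your proof is correct, and it takes a genuinely different route from the paper's. Both arguments start from the same two observations---that ${\bf A}_p$ is symmetric and that every row sum $\sum_{j=1}^p \left( \frac{i+j-1}{p} \right)$ vanishes---but the paper then proves the cofactor identity combinatorially, via the weighted version of Kirchhoff's matrix-tree theorem: it views a matrix with zero row sums as a weighted Laplacian of $K_p$, notes that every cofactor in row $i$ equals the spanning-tree sum $\sum_{T} w_i(T)$, and observes that under a symmetric specialization the weight of each tree no longer depends on the choice of root, so all cofactors coincide. Because the matrix-tree theorem is a polynomial identity in the indeterminate weights, the paper's argument is uniform and needs no case analysis on rank. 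Your adjugate argument is the standard linear-algebraic counterpart: the identities ${\bf A}_p\,\mathrm{adj}({\bf A}_p) = \mathrm{adj}({\bf A}_p)\,{\bf A}_p = (\det {\bf A}_p)\,{\bf I} = {\bf 0}$ put the columns and rows of the adjugate in the right and left kernels, and the dichotomy on rank (all cofactors zero when the rank is at most $p-2$; $\mathrm{adj}({\bf A}_p) = c\,{\bf J}$ when the kernel is exactly the span of ${\bf 1}$) is handled correctly, including the verification that constancy down columns and along rows forces a single constant. Your closing remark is also well judged: the lemma as you prove it is self-contained and does not need the later fact that $c_{1,p} = \det {\bf C}_p \neq 0$, whereas quoting that fact to rule out the degenerate case would have created a circularity. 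What your approach buys is elementary self-containment with no combinatorial machinery; what the paper's buys is a case-free argument and an explicit combinatorial interpretation of the common cofactor value as a spanning-tree generating function.
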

\begin{proof}
We note that ${\bf A}_p$ is a symmetric matrix with the $i$-th row sum
$$
\sum_{j=1}^p \left( \frac{i+j-1}{p} \right) = 0
$$
for every $i$. Since the row sums vanish, the cofactor $c_{i,j}$ is independent of $j$. By symmetry, 
$c_{i,j}$ is also independent of $i$.
One way to prove Lemma \ref{cofactor} combinatorially is to use the standard weighted version of 
Kirchoff's matrix-tree theorem \cite{M}, \cite{S}. We include it here for completeness.
Consider the complete graph $K_p$ on vertices $\{1,2, \ldots, p \}$, and introduce the indeterminates 
$x_{i,j}$ for $ 1 \leq i , j \leq p$. Define
$$
D_i = \sum _{j=1}^p x_{i,j} ~ - x_{i,i}
$$
and define the weighted Laplacian matrix by setting ${\bf L}_p = [ L_{i,j}]_{1 \leq i,j \leq p}$,  with
$$
L_{i,j} = \left\{
\begin{array}{rl}
D_i & ~~ \mbox{if} ~~ i =j\\
-x_{i,j}  & ~~ \mbox{if} ~~ i \neq j
\end{array}
\right.
$$
Let $Sp \, (K_p)$ denote the set of spanning trees of $K_p$.
For any given index $i$, we can consider a $T \in Sp \, (K_p)$ as being {\em rooted} 
at vertex $i$. This simply gives 
an orientation to each edge $e = \{r, s \} $ of $T$ by orienting it from $r$ to $s$ iff
$s$ is closer to the root than $r$ in $T$. Define the weight of 
$e \in T $ by $ w_i(e) = x_{r,s}$ and the weight of $T$ itself by
$$
w_i(T) = \prod_{e \in T} w_i (e)
 ~.
$$
Then 
any cofactor $ c_{i,j}$ of an element in the $i$-th row of ${\bf L}_p$ is identical and evaluates to
\begin{equation}
\label{cij}
c_{i,j} = 
\sum_{T \in Sp \, (K_p)} w_i(T)  ~.
\end{equation}
This is the content of the weighted generalization of Kirchoff's matrix-tree theorem.
Suppose we specialize each $ x_{r,s}$, $r \neq s$ to a numerical value such that the resulting matrix
is symmetric (i.e $ x_{r,s}$ and $  x_{s,r}$ are assigned the same value).
Given a $ T \in Sp \, (K_p)$,  $w_i(T)$ then specializes to a fixed value independent of $i$ since 
the symmetry of the matrix implies that 
either edge orientation results in the same numerical weight for the edge.
Therefore the sum in (\ref{cij}) evaluates to the same quantity  independently of 
$ i,j$.
\end{proof}

Since
$ c_{1,j} = \det {\bf C}_p $ for all $j$,
we have proved
\begin{lemma}
$$
H_p (x) = \det {\bf C}_p 
 \sum_{j=1}^p a_j(x) ~.
$$
\end{lemma}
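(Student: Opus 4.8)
The plan is to read off the factorization directly from a cofactor expansion of the reduced determinant in (\ref{reduced}), using Lemma \ref{cofactor} to collapse all of the relevant cofactors into a single constant. First I would note that the matrix in (\ref{reduced}) agrees with ${\bf A}_p$ in every row except the first: rows $2$ through $p$ are exactly the Legendre-symbol rows $\left( \frac{i+j-1}{p} \right)$ for $i = 2, \ldots, p$, while the first row carries the only $x$-dependence, namely $(a_1(x), \ldots, a_p(x))$. Since the determinant is linear in its first row with the remaining rows held fixed, $H_p(x)$ is automatically a linear combination of $a_1(x), \ldots, a_p(x)$, and the coefficient of $a_j(x)$ is the cofactor of the $(1,j)$ entry. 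Deleting row $1$ and column $j$ from the matrix in (\ref{reduced}) leaves precisely the same submatrix one obtains by deleting row $1$ and column $j$ from ${\bf A}_p$, so that coefficient is exactly the cofactor $c_{1,j}$ of ${\bf A}_p$. This gives $H_p(x) = \sum_{j=1}^p c_{1,j}\, a_j(x)$, as already recorded above.

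The decisive step is then Lemma \ref{cofactor}, which asserts that all cofactors of ${\bf A}_p$ coincide. In particular $c_{1,j} = c_{1,p}$ for every $j$, and the cofactor $c_{1,p}$ was identified above as $\det {\bf C}_p$, the determinant of the matrix displayed in (\ref{lterm}). Factoring this common constant out of the sum immediately yields $H_p(x) = \det {\bf C}_p \sum_{j=1}^p a_j(x)$, which is the assertion.

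There is no serious obstacle here, since Lemma \ref{cofactor} has already done the substantive work; the statement is an immediate corollary of multilinearity together with the equality of cofactors. The only points requiring a moment of care are the sign and index bookkeeping in the expansion: one should confirm that the sign $(-1)^{1+j}$ folded into each $c_{1,j}$ matches the cofactor convention used for ${\bf A}_p$, and that the minor at position $(1,p)$ really is ${\bf C}_p$ rather than a permuted copy of it. Both are routine, the latter being exactly the reindexing $\left( \frac{i+j-1}{p} \right) \mapsto \left( \frac{i+j}{p} \right)$ recorded in (\ref{lterm}).
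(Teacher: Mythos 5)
Your proof is correct and follows exactly the paper's own route: expand the reduced determinant (\ref{reduced}) along its first row, identify the resulting coefficients as the cofactors $c_{1,j}$ of ${\bf A}_p$ (with $c_{1,p} = \det {\bf C}_p$), and invoke Lemma \ref{cofactor} to equate them all. The sign and indexing checks you flag (oddness of $p$ making $(-1)^{1+p}=1$, and the minor at $(1,p)$ being ${\bf C}_p$) are precisely the bookkeeping implicit in the paper's argument.
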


Note that
$$
 \sum_{j=1}^p a_j(x) = 
\sum_{k=0}^{p-1} b_k x^k
$$
where $b_k$ is as given in (\ref{bk}).
Next we evaluate $\det {\bf C}_p$.
\begin{lemma}
\label{lemma5}
\begin{equation}
\label{detAp}
\det {\bf C}_p = (-1)^{\frac{p-1}{2}} p^{\frac{p-3}{2}} ~.
\end{equation}
\end{lemma}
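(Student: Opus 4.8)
The plan is to diagonalize ${\bf C}_p$ by the discrete Fourier transform, exploiting that its $(i,j)$ entry $\left( \frac{i+j}{p} \right)$ depends only on $i+j$. Abbreviate the Legendre symbol by $\chi(a) = \left( \frac{a}{p} \right)$, set $\zeta = e^{2\pi i/p}$, and let $g = \sum_{t=1}^{p-1}\chi(t)\zeta^t$ be the quadratic Gauss sum. The one tool I would borrow from Gauss sum theory is the identity
$$\chi(a) = \frac{1}{g}\sum_{t=1}^{p-1}\chi(t)\,\zeta^{at},$$
valid for every integer $a$, which follows from $\ol{\chi}=\chi$ and the standard evaluation $\sum_t \chi(t)\zeta^{at} = \chi(a)\,g$ (this is where the orthogonality of the additive characters, i.e.\ of trigonometric functions, is used). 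Crucially, only $g^2 = \chi(-1)\,p = (-1)^{\frac{p-1}{2}}p$ is needed below, so the delicate sign of $g$ itself never enters.

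Applying this identity to each entry factors the matrix \emph{exactly}. With the Vandermonde-type matrix $V = [\zeta^{it}]_{1\le i,t\le p-1}$ and the diagonal matrix $\Lambda = \mathrm{diag}\!\left(\chi(t)/g\right)_{1\le t\le p-1}$, one checks that the $(i,j)$ entry of $V\Lambda V^t$ is $\frac{1}{g}\sum_t \chi(t)\zeta^{(i+j)t} = \chi(i+j)$, so ${\bf C}_p = V\Lambda V^t$ and hence
$$\det {\bf C}_p = (\det V)^2 \prod_{t=1}^{p-1}\frac{\chi(t)}{g}.$$
There is no boundary difficulty: $V$ is $(p-1)\times(p-1)$ and invertible, and the identity holds term by term even when $i+j\equiv 0 \pmod p$.

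It remains to evaluate the two factors. Factoring $\zeta^t$ out of the $t$-th column of $V$ and using $\prod_{t=1}^{p-1}\zeta^t = 1$ reduces $\det V$ to a Vandermonde determinant in the nontrivial $p$-th roots of unity $\omega_t = \zeta^t$, so that $(\det V)^2 = \prod_{s<t}(\omega_t-\omega_s)^2$ is the discriminant $\mathrm{disc}(\Phi_p)$ of the $p$-th cyclotomic polynomial. Either by citation or by the self-contained route $\mathrm{disc}(\Phi_p) = (-1)^{\frac{(p-1)(p-2)}{2}}\prod_t \Phi_p'(\omega_t)$ with $\Phi_p'(\omega_t) = p\,\omega_t^{p-1}/(\omega_t-1)$ and $\prod_t(\omega_t-1) = \Phi_p(1) = p$, one gets $(\det V)^2 = (-1)^{\frac{p-1}{2}}p^{p-2}$. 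For the diagonal product, $\prod_t\chi(t) = (-1)^{\frac{p-1}{2}}$ (there are $\frac{p-1}{2}$ nonresidues), while $g^{p-1} = (g^2)^{\frac{p-1}{2}} = \big((-1)^{\frac{p-1}{2}}p\big)^{\frac{p-1}{2}}$.

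Multiplying the two factors, the prime powers combine to $p^{p-2}/p^{\frac{p-1}{2}} = p^{\frac{p-3}{2}}$, and the signs collapse: writing $m = \frac{p-1}{2}$, the total sign is $(-1)^m(-1)^m(-1)^{-m^2} = (-1)^{m^2} = (-1)^m$ because $m^2 \equiv m \pmod 2$. This gives $\det {\bf C}_p = (-1)^{\frac{p-1}{2}}p^{\frac{p-3}{2}}$, as claimed. I expect the main obstacle to be the clean evaluation $(\det V)^2 = \mathrm{disc}(\Phi_p)$ together with keeping every power of $p$ and every sign correct; once the factorization ${\bf C}_p = V\Lambda V^t$ is in hand, everything else is bookkeeping.
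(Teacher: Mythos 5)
Your proof is correct, and it takes a genuinely different route from the paper's. The paper reverses the columns of ${\bf C}_p$ to pass to ${\bf B}_p = \left[ \left( \frac{i-j}{p} \right) \right]$ and then determines the \emph{entire spectrum} of ${\bf B}_p$: it exhibits $p-1$ explicit eigenvectors (sine vectors from the imaginary part of the Gauss-sum identity, differences of cosine vectors $v_r - v_s$, and the two residue/nonresidue indicator vectors $w_1, w_2$), proves their independence through a battery of trigonometric orthogonality identities, and multiplies the eigenvalues $\pm\sqrt{p}$ (each of multiplicity $\frac{p-1}{2}-1$) and $\pm 1$. You instead factor ${\bf C}_p$ exactly as $V \Lambda V^t$ using $\sum_t \chi(t)\zeta^{at} = \chi(a) g$ (valid for all $a$, including $i+j \equiv 0 \pmod p$, as you note), reducing everything to $(\det V)^2 = \mathrm{disc}(\Phi_p) = (-1)^{\frac{p-1}{2}} p^{p-2}$ and $\det \Lambda = (-1)^{\frac{p-1}{2}}/g^{p-1}$; I checked the Vandermonde reduction, the discriminant evaluation, and the final sign bookkeeping via $m^2 \equiv m \pmod 2$, and all of it is sound. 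Your route buys three things: it is uniform in $p \bmod 4$, whereas the paper treats $p \equiv 1 \pmod 4$ in detail and declares $p \equiv 3 \pmod 4$ "similar"; it needs only the elementary identity $g^2 = (-1)^{\frac{p-1}{2}}p$ rather than Gauss's determination of the sign of $g$ (the deep part of the Gauss-sum evaluation, which the paper's equation for $g_r$ invokes); and it avoids having to prove that the constructed eigenvectors are linearly independent. What the paper's longer spectral argument buys is finer information than the determinant alone, namely the full spectrum of ${\bf B}_p$, recorded in its Remark. Your factorization is closest in spirit to the referee's alternate proof sketched after Lemma 5 --- diagonalizing the $p \times p$ circulant ${\bf D}_p$ by the discrete Fourier basis --- but by writing the diagonalization of ${\bf C}_p$ directly as a triple product you avoid the boundary bookkeeping of deleting a row and column of ${\bf D}_p$ and tracking which eigenvectors survive.
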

\begin{proof}
Let ${\bf E}_p $ denote the $p \times p $  exchange matrix which has 
1's along the
anti-diagonal and 0's elsewhere.
Clearly, 
$$
\det {\bf E}_p = (-1)^{\frac{p(p-1)}{2}}~.
$$
Let ${\bf B}_p = {\bf C}_p {\bf E}_p$. Then 
$$
{\bf B}_p = \left[ \left( \frac{ i-j}{p} \right) \right]_{1 \leq i,j \leq p-1}
$$
and
\begin{equation}
\label{CB}
\det {\bf C}_p = (-1)^{\frac{p-1}{2}} \det {\bf B}_p ~.
\end{equation}
Note that 
${\bf B}_p$ is symmetric for $ p  \equiv 1 \!\! \pmod{4}$ and
skew-symmetric for $ p  \equiv 3 \!\! \pmod{4}$.

We determine the spectrum of ${\bf B}_p$, and compute 
$ \det {\bf B}_p$ as the product of its eigenvalues. This results in the evaluation of 
$\det {\bf C}_p $ that we need through (\ref{CB}).

Let $ I = \sqrt{-1}$ and $ \zeta= e^{\frac{2 \pi I }{p}} $ denote a primitive $p$-th root of unity. 
For $ 1 \leq r \leq p-1$  
consider the Gauss sum
$$
g_r = \sum_{j=0}^{p-1} \left( \frac{j}{p} \right)  \, \zeta^{rj} ~.
$$
Then
\dsp
\begin{equation}
\label{eval}
g_r = \left\{
\begin{array}{rl}
 \left( \frac{r}{p} \right) \sqrt{p}  &~~\mbox{if}~~ p  \equiv 1 \!\! \pmod{4} \\
  I \left( \frac{r}{p}   \right) \sqrt{p}  &~~\mbox{if}~~ p  \equiv 3 \!\! \pmod{4} 
\end{array}
\right.
\end{equation}
\sp
A proof of Gauss's evaluation of $g_r$ can be found in \cite{IR}.
Changing the summation index, we can write
\begin{equation}
\label{eval2}
g_r = \sum_{j=0}^{p-1} \left( \frac{i-j}{p} \right)  \, \zeta^{r(i-j)} ~.
\end{equation}
We will give the details of the  proof for primes of the form
$p  \equiv 1 \!\! \pmod{4}$. 
The proof for primes $p  \equiv 3 \!\! \pmod{4}$ is similar.

For 
$p  \equiv 1 \!\! \pmod{4}$, $ \frac{p-1}{2}$ is even, and $ \det {\bf C}_p = \det {\bf B}_p $.
Using (\ref{eval}) and (\ref{eval2}), we have
$$
\sum_{j=0}^{p-1} \left( \frac{i-j}{p} \right)  \, \zeta^{-rj} = 
 \left( \frac{r}{p} \right) \sqrt{p}    \, \zeta^{-ri}
$$
or 
\begin{equation}
\label{main}
\left( \frac{i}{p} \right)  + 
\sum_{j=1}^{p-1} \left( \frac{i-j}{p} \right)  \, \zeta^{-rj} = 
 \left( \frac{r}{p} \right) \sqrt{p}    \, \zeta^{-ri} ~.
\end{equation}
Equating the imaginary parts in (\ref{main}),
$$
\sum_{j=1}^{p-1} \left( \frac{i-j}{p} \right) \sin \frac{2 \pi rj}{p} = 
 \left( \frac{r}{p} \right) \sqrt{p}    \,
 \sin \frac{2 \pi ri}{p}  ~.
$$
Therefore for every $r$ which is not zero modulo $p$, the vector
$$
u_r = \sum_{j=1}^{p-1} 
 \left( \sin \frac{2 \pi rj}{p} \right)  {\bf e}_j^t
$$
is an eigenvector of ${\bf B}_p$ corresponding to eigenvalue
$ \left( \frac{r}{p} \right) \sqrt{p} $. The vectors corresponding to $r$ and $p-r$
differ only in sign. Therefore if we let
$$
T_1 =  \{ u_r ~|~ 1 \leq r \leq \frac{p-1}{2} \}
$$
then exactly half of the $ u_r \in T_1$ are eigenvectors of ${\bf B}_p$ corresponding to 
eigenvalue $ \sqrt{p}$, and the other half are eigenvectors corresponding to 
the eigenvalue $ -\sqrt{p}$.
For $ 1 \leq r \leq s \leq \frac{p-1}{2}$, we have the trigonometric identity 
$$
  \sum_{j=1}^{p-1} 
 \sin \frac{2 \pi rj}{p} 
 \sin \frac{2 \pi sj}{p} 
=
\left\{
\begin{array}{ll}
 0 &~~\mbox{if}~  r < s \\
\frac{p}{2} 
  &~~\mbox{if}~  r = s
\end{array}
\right.
$$
where the $ r= s $ evaluation is a consequence of the 
general trigonometric identity 
\begin{equation}
\label{twin}
\sum_{j=1}^n \sin^2 j x = \frac{n}{2} - \frac{\cos(n+1) x \sin nx}{2 \sin x}
\end{equation}
(\cite{GR}, p. 30).
Therefore
the $ \frac{p-1}{2}$ eigenvectors in $T_1$ are orthogonal, and so linearly independent.

Next we obtain a set of $ \frac{p-1}{2} - 2 $ more eigenvectors of ${\bf B}_p$.
Equating the real parts in (\ref{main}), we obtain
\begin{equation}
\label{cos}
\left( \frac{i}{p} \right)  + 
\sum_{j=1}^{p-1} \left( \frac{i-j}{p} \right) \cos \frac{2 \pi rj}{p} = 
 \left( \frac{r}{p} \right) \sqrt{p} \,   
 \cos \frac{2 \pi ri}{p}  ~.
\end{equation}
Let 
$$
v_r = \sum_{j=1}^{p-1} 
 \left( \cos \frac{2 \pi rj}{p} \right)  {\bf e}_j^t ~.
$$
These are not themselves eigenvectors because of the extra 
term $ \left( \frac{i}{p} \right) $ in (\ref{cos}).
But the nonzero vectors of the form 
\begin{equation}
\label{nonzero}
v_r - v_s
\end{equation}
for $ 1 \leq r < s \leq p-1$ are eigenvectors of ${\bf B}_p$ as long as 
$ \left( \frac{r}{p} \right) = \left( \frac{s}{p} \right) $. 
We will single out
$$
\frac{p-1}{2} -2 
$$
of these eigenvectors, half corresponding to the eigenvalue $ \sqrt{p}$, and the other half to 
$ - \sqrt{p}$.
Let $g$ be a generator of the multiplicative group $ \Z_p^* $.
Then $ \left( \frac{g}{p} \right) =  -1 $.  Put $ h = g^4$.
Since 
$$
1 =  \left( \frac{1}{p} \right) = \left( \frac{h^k}{p} \right) ,
$$
taking $ r =1$, the $ \frac{p-1}{4}-1$ vectors
$$
v_1  - v_{h^k}
$$
for $ k = 1,  2, \ldots , \frac{p-1}{4}-1$ are eigenvectors of ${\bf B}_p$ corresponding to the 
eigenvalue $ \sqrt{p}$.
Similarly, 
$$
- 1 =  \left( \frac{g}{p} \right) = \left(  \frac{ g h^k}{p} \right) ,
$$
and
taking $ r =g$ in (\ref{nonzero}), the $ \frac{p-1}{4} -1$ vectors
$$
v_g  - v_{g h^k}
$$
for $ k = 1, 2, \ldots , \frac{p-1}{4}-1$ are eigenvectors of ${\bf B}_p$ corresponding to the 
eigenvalue $ - \sqrt{p}$.
These eigenvectors are of the form 
$$
v_1  - v_{h^k} =
\sum_{j=1}^{p-1} 
 \left( \cos \frac{2 \pi j}{p} 
 - \cos \frac{2 \pi h^k j}{p} \right)   {\bf e}_j^t
$$
in the first case, and 
$$
v_g  - v_{g h^k} = 
\sum_{j=1}^{p-1} 
 \left( \cos \frac{2 \pi g j}{p} - \cos \frac{2 \pi g h^k j}{p} \right)   {\bf e}_j^t
$$
in the second.
Let
$$
T_2 = \{ v_1  - v_{h^k} ~|~  k = 1, 2, \ldots , \frac{p-1}{4}-1 \} \cup
 \{ v_g  - v_{gh^k} ~|~  k = 1, 2, \ldots , \frac{p-1}{4}-1 \}  ~.
$$
Finally, consider  the two vectors
\begin{eqnarray*}
w_1 &=& \sum_{j=1}^{p-1} \half \left( 1 - \left( \frac{j}{p} \right) \right) {\bf e}_j^t \\
w_2 &=& \sum_{j=1}^{p-1} \half \left( 1 + \left( \frac{j}{p} \right) \right) {\bf e}_j^t  ~.
\end{eqnarray*}
Thus $w_1$ is a 0-1 vector with a 1 for every index for which the row sum of ${\bf B}_p$ is 1. Similarly,
$w_2$ is a 0-1 vector with a 1 for every index for which the row sum of ${\bf B}_p$ is $-1$.

The fact that $ w_1$ is an eigenvalue of ${\bf C}_p$ (and also 
of ${\bf B}_p$)
is a consequence of the identity
\dsp
$$
\sum_
{
\begin{array}{c}
j=1 \vspace*{-2mm}\\
\vspace*{-2mm}
( \frac{j}{p} ) = -1
\end{array}
}^{p-1}
 \left( \frac{i+j}{p} \right)  
=
\left\{
\begin{array}{ll}
 0 &~~\mbox{if}~  \left( \frac{i}{p} \right)  = 1 \\
 1 &~~\mbox{if}~  \left( \frac{i}{p} \right)  = -1 
\end{array}
\right.
$$
\sp
To prove this identity, write it in the form
$$
\sum_{j=1}^{p-1} 
 \left( \frac{i+j}{p} \right)  \half \left( 1 - \left( \frac{j}{p} \right) \right) = 
   \half \left( 1 - \left( \frac{i}{p} \right) \right)
$$
In this latter form
the identity can be proved by
expanding the left hand side and making use of
$$
\sum_{j=0}^{p-1} 
 \left( \frac{i+j}{p} \right)   \left( \frac{j}{p} \right)  =  -1
$$
which holds for $ p \not | ~i$
from the general orthogonality condition
\begin{equation}
\sum_{k=0}^{p-1} 
\left( \frac{i+k}{p} \right) 
\left( \frac{j+k}{p} \right) 
 = \left\{
\begin{array}{rl}
p-1 & \mbox{~~if~~} i = j\\
-1 & \mbox{~~if~~} i \neq j
\end{array}
\right.
\end{equation}
For ${\bf B}_p$, we obtain
$$
\sum_{j=1}^{p-1} 
 \left( \frac{i-j}{p} \right)  \half \left( 1 - \left( \frac{j}{p} \right) \right) = 
   \half \left( 1 - \left( \frac{i}{p} \right) \right) 
$$
so that $w_1$ is an eigenvector of ${\bf B}_p$ corresponding to eigenvalue $1$. Similarly, 
$w_2$ is an eigenvector of ${\bf B}_p$ 
corresponding to eigenvalue $ -1$.
Putting 
$$
T_3 = \{ w_1, w_2 \}
$$
we have $p-1$ eigenvectors in
$$
T_1 \cup T_2 \cup T_3
$$
with $\frac{p-2}{2}$ corresponding to 
eigenvalue $ \sqrt{p}$,
$\frac{p-2}{2}$ corresponding to 
eigenvalue $ -\sqrt{p}$, and one each for the eigenvalues $ \pm 1$.
To show that there is no linear dependence among these vectors, we proceed
to show that any two vectors $ u , v \in T_1 \cup T_2 \cup T_3$ are 
orthogonal.
We have already done this for 
$ u , v \in T_1 $. For $ u , v \in T_2$, we need to show 
\begin{eqnarray*}
\sum_{j=1}^{p-1}
 \left( \cos \frac{2 \pi j}{p} - \cos \frac{2 \pi h^r j}{p} \right)  
 \left( \cos \frac{2 \pi j}{p} - \cos \frac{2 \pi h^s j}{p} \right)  
& = & 0 \\
\sum_{j=1}^{p-1}
 \left( \cos \frac{2 \pi g j}{p} - \cos \frac{2 \pi g h^r j}{p} \right)  
 \left( \cos \frac{2 \pi g j}{p} - \cos \frac{2 \pi g h^s j}{p} \right)  
&= & 0 \\
\sum_{j=1}^{p-1}
 \left( \cos \frac{2 \pi j}{p} - \cos \frac{2 \pi h^r j}{p} \right)  
 \left( \cos \frac{2 \pi g j}{p} - \cos \frac{2 \pi g h^s j}{p} \right)  
&= & 0 
\end{eqnarray*}
for $ r \not \equiv s \!\! \pmod{p}$.

These identities follow from
$$
\sum_{j=1}^{n-1} \cos \frac{2 \pi r j }{n}
 \cos \frac{2 \pi s  j }{n}
 ~=~ \left\{
\begin{array}{ll}
 -1 &~~\mbox{if}~    s \neq r, n-r \\
 n-1 &~~\mbox{if}~    s = r= \frac{n}{2} \\
\frac{n-2}{2}
  &~~\mbox{if}~  s = 1 , n-r
\end{array}
\right.
$$
which holds for
$ 1 \leq r \leq s \leq n-1$,
and generalizes
the twin identity to (\ref{twin})
$$
\sum_{j=1}^n \cos^2 j x = \frac{n-1}{2} + \half \cos nx \sin (n+1)x \csc x
$$
(\cite{GR}, p. 31).

To prove that the vectors in $T_1$ are orthogonal to the vectors in $T_2$, we use 
the orthogonality relations
$$
\sum_{j=1}^{n-1} \cos \frac{2 \pi r j }{n}
 \sin \frac{2 \pi s  j }{n} ~=0
$$
valid for all integral $ r, s , n$.

Finally, below are the identities that are needed to prove that the 
vectors in $T_3$ are orthogonal to vectors in $ T_1$ and $T_2$.
If $p$ is a prime of the form $ 4k+1$, then

\begin{equation}
\label{T13}
\sum_{ \begin{array}{c} j=1  \vspace*{-1mm}\\ ( \frac{j}{p} ) = 1 \end{array} }^{p-1}
\sin \frac{2 \pi r j}{p} = 0 
\end{equation}
for any $r$, and 
\begin{equation}
\label{T23}
\sum_{ \begin{array}{c} j=1  \vspace*{-1mm}\\ ( \frac{j}{p} ) = 1 \end{array} }^{p-1}
\cos \frac{2 \pi r j}{p} =  \frac{-1+( \frac{r}{p} ) \sqrt{p} }{2}
\end{equation}
and
\begin{equation}
\label{T232}
\sum_{ \begin{array}{c} j=1 \vspace*{-1mm} \\ ( \frac{j}{p} ) = -1 \end{array} }^{p-1}
\cos \frac{2 \pi r j}{p} =  \frac{-1-( \frac{r}{p} ) \sqrt{p} }{2} ~.
\end{equation}

The first one of these can be written as 
$$
\sum_{j=1}^{p-1} \half \left( 1+\left( \frac{j}{p} \right) \right) \sin \frac{2 \pi r j}{p} = 0  ~.
$$
Clearly,
$$
\sum_{j=1}^{p-1} \sin \frac{2 \pi r j}{p} = 0 
$$
by looking at $ \sin$ as the imaginary part of $ \zeta$ and summing the geometric series in $ \zeta$. 
Therefore to prove  (\ref{T13}), it is enough to prove
$$
\sum_{j=1}^{p-1} \left( \frac{j}{p} \right)  \sin \frac{2 \pi r j}{p} = 0 
$$
which is an immediate consequence of the evaluation of the Gauss sum by equating 
the imaginary parts.

The identities  (\ref{T23}) and (\ref{T232}) are obtained by evaluating 
$$
\sum_{j=1}^{p-1}  \half \left( 1 \pm \left( \frac{j}{p} \right) \right)
\cos \frac{2 \pi r j}{p} 
$$
again
by making use of the evaluation of Gauss sums.
We sum the geometric series in $ \zeta$ and
equate the real parts.

Therefore the spectrum of ${\bf B}_p$ consists of $ \pm 1, \pm \sqrt{p}$ where 1 and $-1$ each have multiplicity one,
and $ \sqrt{p}$ and $ - \sqrt{p}$ each have multiplicity $ \frac{p-1}{2} - 1$. This gives
$$
\det {\bf B}_p = (-1)^{\frac{p-1}{2}} p^{\frac{p-3}{2}} ~.
$$
This completes the proof of Lemma \ref{lemma5}.
\end{proof}
I am grateful to the anonymous referee who suggested an alternate, and somewhat more economical proof
of Lemma \ref{lemma5}. I would like to sketch this approach here. 
Let 
$$
{\bf D}_p = \left[ \left( \frac{ i-j}{p} \right) \right]_{1 \leq i,j \leq p}
$$
We can view ${\bf B}_p$ as a submatrix of of ${\bf D}_p$ obtained by deleting the first row and column of ${\bf D}_p$.
${\bf D}_p$ is a circulant matrix, and therefore it has a basis of eigenvectors consisting of the 
$( 1 , \zeta^r, \zeta^{2r}, \ldots , \zeta^{(p-1)r})$. 
The eigenvalues are $g_r$: $(p-1)/2$ of them equal $g_1$, $(p-1)/2$ of 
them equal $  - g_1$ and also $g_0 =0$ must be included.
If we have an eigenvector of ${\bf D}_p$ with first entry zero, deleting that zero gives an eigenvector of 
${\bf B}_p$ with the same eigenvalue. Taking differences of the above basis elements gives $ (p-3)/2$ independent
eigenvectors of ${\bf B}_p$ with eigenvalue $g_1$, and $(p-3)/2$ with eigenvalue $ -g_1$. This accounts for 
all but two eigenvectors of ${\bf B}_p$, and these two are $ w_1$ and $w_2$.

\begin{remark}
For $p  \equiv 3 \!\! \pmod{4}$, the spectrum of 
${\bf B}_p$ consists of $ \pm I, \pm I \sqrt{p}$ where $I$ and $-I$ each have multiplicity one,
and $I \sqrt{p}$ and $ - I \sqrt{p}$ each have multiplicity $ \frac{p-1}{2} - 1$. 
In this case ${\bf B}_p$ is skew-symmetric, so the determinant is non-negative.
\end{remark}

\section{Special values}

We can obtain factors of $H_p(x)$ by finding zeros of 
\begin{equation}
\label{poly}
\sum_{k=0}^{p-1} b_k x^k
\end{equation}
where $b_k$ is as given in (\ref{bk}).

\begin{lemma}
For any $p$, $ x^2 ~|~ H_p(x)$.
\end{lemma}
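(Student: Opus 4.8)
The plan is to read the result straight off the explicit evaluation of $H_p(x)$ assembled in the preceding lemmas, rather than to work with the determinant directly. By Theorem~\ref{thm} (equivalently, by combining Lemma~\ref{cofactor} with Lemma~\ref{lemma5}),
$$
H_p(x) = (-1)^{\frac{p-1}{2}} p^{\frac{p-3}{2}} \sum_{k=0}^{p-1} b_k x^k ,
$$
and the scalar prefactor $(-1)^{\frac{p-1}{2}} p^{\frac{p-3}{2}}$ is a nonzero integer. Hence $x^2$ divides $H_p(x)$ in $\Z[x]$ if and only if the two lowest coefficients of $\sum_{k=0}^{p-1} b_k x^k$ vanish, i.e. if and only if $b_0 = b_1 = 0$. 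So the whole lemma reduces to checking these two Legendre-symbol sums.

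For this I would invoke the single standard fact that the Legendre symbol is a nontrivial multiplicative character, so it sums to zero over a complete set of nonzero residues:
$$
\sum_{i=1}^{p-1} \left( \frac{i}{p} \right) = 0 ,
$$
reflecting that among $1, 2, \ldots, p-1$ there are exactly $\frac{p-1}{2}$ quadratic residues and $\frac{p-1}{2}$ nonresidues. Then, starting from $b_k = \sum_{i=1}^{p-k}\left(\frac{i}{p}\right)$ and using $\left(\frac{p}{p}\right)=0$, one gets
$$
b_0 = \sum_{i=1}^{p} \left( \frac{i}{p} \right) = \sum_{i=1}^{p-1} \left( \frac{i}{p} \right) = 0 ,
\qquad
b_1 = \sum_{i=1}^{p-1} \left( \frac{i}{p} \right) = 0 .
$$
Both vanish, which is exactly what is needed.

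There is essentially no obstacle here: the real work was already done in establishing the closed form for $H_p(x)$, and once that is in hand the divisibility by $x^2$ is immediate, since the first two partial sums of the Legendre symbol are each the full character sum (the $i=p$ term contributing nothing). The only point worth stating carefully is that the prefactor is a nonzero constant, so that the factors of $x$ in $H_p(x)$ come precisely from the polynomial $\sum_k b_k x^k$; this is what lets me pass freely between divisibility in $\Z[x]$ and the vanishing of $b_0$ and $b_1$.
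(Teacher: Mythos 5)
Your proof is correct and takes essentially the same route as the paper: both reduce divisibility by $x^2$ to the vanishing of $b_0$ and $b_1$ via the evaluation $H_p(x) = (-1)^{\frac{p-1}{2}} p^{\frac{p-3}{2}} \sum_{k} b_k x^k$, and both observe that these two coefficients are complete Legendre-symbol sums over the nonzero residues (the $i=p$ term contributing nothing), hence zero. The paper simply states this as ``easy to see,'' whereas you spell out the character-sum argument explicitly.
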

\begin{proof}
It is easy to see that for any odd prime, $ b_0 = b_1 =0$.
Therefore $H_p(x)$ is divisible by $x^2$.
\end{proof}

Next we consider the case $p  \equiv 1 \!\! \pmod{4}$.
\begin{lemma}
\label{l2}
If $~p  \equiv 1 \!\! \pmod{4}$, then we also have
$ (x^2-1) ~|~ H_p(x)$.
\end{lemma}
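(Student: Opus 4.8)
The plan is to reduce the claim to two numerical evaluations of the polynomial $P(x) = \sum_{k=0}^{p-1} b_k x^k$, whose product with the nonzero constant $(-1)^{(p-1)/2} p^{(p-3)/2}$ is $H_p(x)$ by Theorem \ref{thm}. Since $x-1$ and $x+1$ are monic and coprime in $\Z[x]$, proving $(x^2-1) \mid H_p(x)$ is equivalent to proving $P(1) = 0$ and $P(-1) = 0$: the first gives $(x-1) \mid P(x)$ over $\Z[x]$ via the division algorithm, and substituting $x=-1$ into the resulting integer quotient then forces $(x+1)$ to divide it as well. Both evaluations will use only the hypothesis $p \equiv 1 \pmod 4$, equivalently $\left(\frac{-1}{p}\right) = 1$.

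First I would compute $P(1) = \sum_{k=0}^{p-1} b_k$ by interchanging the order of summation in $b_k = \sum_{i=1}^{p-k}\left(\frac{i}{p}\right)$. A fixed index $i$ contributes to exactly those $b_k$ with $k \le p-i$, so its total weight is $p-i+1$, which gives $P(1) = \sum_{i=1}^{p-1}(p+1-i)\left(\frac{i}{p}\right) = -\sum_{i=1}^{p-1} i\left(\frac{i}{p}\right)$, using $\sum_{i=1}^{p-1}\left(\frac{i}{p}\right) = 0$ and the fact that the $i=p$ term vanishes. Then the substitution $i \mapsto p-i$ combined with $\left(\frac{p-i}{p}\right) = \left(\frac{-1}{p}\right)\left(\frac{i}{p}\right) = \left(\frac{i}{p}\right)$ shows that $\sum_{i=1}^{p-1} i\left(\frac{i}{p}\right)$ equals its own negative and hence vanishes, so $P(1)=0$.

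For $P(-1) = \sum_{k=0}^{p-1}(-1)^k b_k$ I would swap the summations again: the weight of $\left(\frac{i}{p}\right)$ becomes $\sum_{k=0}^{p-i}(-1)^k$, which is $1$ when $p-i$ is even and $0$ when $p-i$ is odd. Since $p$ is odd this means the weight is $1$ for $i$ odd and $0$ for $i$ even, so $P(-1) = \sum_{i \mbox{ odd}}\left(\frac{i}{p}\right)$. To see this vanishes, I would split the identity $\sum_{i=1}^{p-1}\left(\frac{i}{p}\right) = 0$ into its odd and even parts and evaluate the even part by writing $i=2m$: this yields $\sum_{i \mbox{ even}}\left(\frac{i}{p}\right) = \left(\frac{2}{p}\right)\sum_{m=1}^{(p-1)/2}\left(\frac{m}{p}\right)$, and the pairing $m \mapsto p-m$ (again using $\left(\frac{-1}{p}\right)=1$) makes the half-range sum equal to half of the vanishing full sum, hence $0$. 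Consequently the odd part vanishes as well and $P(-1)=0$.

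The computations are elementary bookkeeping once the summation orders are interchanged; the single substantive ingredient is $\left(\frac{-1}{p}\right) = 1$ for $p \equiv 1 \pmod 4$, which is precisely what drives both the self-negation argument for $P(1)$ and the half-range cancellation for $P(-1)$. I expect the only point requiring genuine care to be the interchange of summation together with the correct evaluation of the alternating weight $\sum_{k=0}^{p-i}(-1)^k$, where the parity of $p-i$ must be tracked; everything else is routine.
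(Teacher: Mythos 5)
Your proposal is correct and follows essentially the same route as the paper: both reduce the claim to showing $\sum_k b_k(\pm 1)^k = 0$, handle $x=1$ via the identity $\sum_{m=1}^{p-1} m\left(\frac{m}{p}\right)=0$ (your self-negation argument under $m \mapsto p-m$ is just a slicker phrasing of the paper's counting of residue/nonresidue sums), and handle $x=-1$ by splitting into odd/even residue classes, pulling out $\left(\frac{2}{p}\right)$, and killing the half-range sum $\sum_{m=1}^{(p-1)/2}\left(\frac{m}{p}\right)$ with the same pairing. The only ingredient in both cases is $\left(\frac{-1}{p}\right)=1$, exactly as in the paper.
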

\begin{proof}
The polynomial (\ref{poly}) evaluated at 
$ x=1$ and $ x= -1$ are 
\begin{eqnarray*}
\label{sums}
&& \sum_{k=1}^p \sum_{m=0}^k \left( \frac{k-m}{p} \right)  ~, \\ \nonumber
&& \sum_{k=1}^p \sum_{m=0}^k \left( \frac{k-m}{p} \right) (-1)^m 
\end{eqnarray*}
respectively. We will show that both of these evaluate to 0.
Rearranging the first sum,
$$
\sum_{k=1}^p \sum_{m=0}^k \left( \frac{k-m}{p} \right)  = 
\sum_{m=1}^{p-1}  (p-m+1) \left( \frac{m}{p} \right)   ~.
$$
Therefore it suffices to show that 
\begin{equation}
\label{suffices}
\sum_{m=1}^{p-1}  m \left( \frac{m}{p} \right) = 0 , 
\end{equation}
i.e. the sum of the quadratic residues minus the sum of the quadratic nonresidues mod $p$ vanishes.
Since
$$
 \left( \frac{-1}{p} \right) = (-1)^{\frac{p-1}{2}} = 1,  
$$
the map 
$ m \mapsto p-m$ permutes the quadratic 
residues among themselves, and the nonresidues among themselves. Since each
of these sets have an even number of elements for $p  \equiv 1 \!\! \pmod{4}$, this map has no fixed points.
Therefore both the residues and the nonresidues mod $p$ sum to
$$
 \frac{(p-1)}{4} p
$$
and (\ref{suffices}) follows.
The second sum in (\ref{sums}) can be rearranged as 
\begin{equation}
\label{alt}
\sum_{m=1}^{\frac{p-1}{2}} 
\left( \frac{2m-1}{p} \right)   
= - (-1)^{\frac{p^2-1}{8}  }
\sum_{m=1}^{\frac{p-1}{2}} 
\left( \frac{m}{p} \right)   
\end{equation}
and in this case 
the map 
$ m \mapsto p-m$ shows that there are equally many  residues mod $p$ in the range
$ \{ 1, 2, \ldots, \frac{p-1}{2} \}$ as in the range 
$\{ \frac{p-1}{2}+1, \ldots, p-1  \} $. A similar statement holds for nonresidues. 
Therefore the right hand side of (\ref{alt}) is zero and
$H_p (x)$ is divisible by $ x^2 -1  $ for $p  \equiv 1 \!\! \pmod{4}$.
\end{proof}

Note that the elementary arguments we gave for the proof of the evaluations in Lemma \ref{l2} can directly be 
obtained from the following result (see \cite{KW70}, also \cite{HD33}):
\begin{proposition}
Let $p$ be an odd prime and suppose $F$ is a complex-valued function defined on the integers, which is
periodic with period $p$. Then
$$
\sum_{j=0}^{p-1} F(j) +
\sum_{j=0}^{p-1} \left( \frac{j}{p} \right) F(j) =
\sum_{j=0}^{p-1} F(j^2)  ~.
$$
\end{proposition}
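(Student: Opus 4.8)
The plan is to prove the identity by a counting argument applied to the right-hand side, reinterpreting $\sum_{j=0}^{p-1} F(j^2)$ as a reweighted sum over residue classes modulo $p$. The key observation is that, as $j$ runs through a complete residue system modulo $p$, the squares $j^2$ do not hit every class equally often; the multiplicity with which a given class is hit is controlled by the Legendre symbol.

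First I would establish the elementary solution-counting fact that for each $a$ with $0 \leq a \leq p-1$, the number of $j \in \{0, 1, \ldots, p-1\}$ satisfying $j^2 \equiv a \!\! \pmod{p}$ is exactly $1 + \left( \frac{a}{p} \right)$. Indeed, $a=0$ has the unique square root $j=0$, matching $1 + \left( \frac{0}{p} \right) = 1$; a nonzero quadratic residue has precisely the two square roots $\pm j_0$, matching $1 + 1 = 2$; and a quadratic nonresidue has none, matching $1 - 1 = 0$. In other words, the fiber of the squaring map over $a$ has cardinality $1 + \left( \frac{a}{p} \right)$ in every case.

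Next, since $F$ is periodic with period $p$, the value $F(j^2)$ depends only on the residue class of $j^2$ modulo $p$. Grouping the sum on the right according to this class and inserting the count above, I would write
$$
\sum_{j=0}^{p-1} F(j^2) = \sum_{a=0}^{p-1} \#\{ j : j^2 \equiv a \} \, F(a) = \sum_{a=0}^{p-1} \left( 1 + \left( \frac{a}{p} \right) \right) F(a).
$$
Distributing across the two summands of $1 + \left( \frac{a}{p} \right)$ splits this into $\sum_{a=0}^{p-1} F(a)$ and $\sum_{a=0}^{p-1} \left( \frac{a}{p} \right) F(a)$, which is precisely the left-hand side after renaming the index $a$ back to $j$.

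There is no substantial obstacle here: the entire content is the solution-counting formula for $j^2 \equiv a$, a standard fact whose three cases are collapsed into the single expression $1 + \left( \frac{a}{p} \right)$. The one place that warrants a moment of care is the class $a=0$, where the single solution $j=0$ must be counted once and matched against $\left( \frac{0}{p} \right) = 0$; once that boundary case is verified, the identity drops out from a single regrouping of terms.
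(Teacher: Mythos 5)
Your proof is correct. Note that the paper itself gives no proof of this Proposition at all: it is quoted as a known result, with the reader referred to Williams \cite{KW70} and Davenport \cite{HD33}. So there is nothing internal to compare against; your fiber-counting argument --- that the number of $j \in \{0,1,\ldots,p-1\}$ with $j^2 \equiv a \pmod{p}$ is exactly $1 + \left( \frac{a}{p} \right)$ in all three cases ($a \equiv 0$, $a$ a nonzero residue, $a$ a nonresidue), combined with the periodicity of $F$ to regroup $\sum_j F(j^2)$ by residue class --- is the standard elementary proof of this transformation formula, and it is complete: the boundary case $a=0$ is handled explicitly, and the regrouping step uses periodicity exactly where it is needed (since $j^2$ generally lies outside $\{0,\ldots,p-1\}$). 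Your write-up would serve as a self-contained proof of the Proposition that the paper leaves to the literature.
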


Finally, we remark that the coefficients of the quotient polynomials
$$
\frac{(-1)^{\frac{p-1}{2}} H_p (x) }{ p^{\frac{p-3}{2}} x^2 }
~~~ \mbox{and}~~~
\frac{(-1)^{\frac{p-1}{2}} H_p (x) }{ p^{\frac{p-3}{2}} x^2 (x^2-1)}
$$
over $\Z [x]$  can be written  in terms of the partial sums $ b_k$. For 
$p  \equiv 3 \!\! \pmod{4}$ these coefficients are simply $ b_{k+2}$. For
$p  \equiv 1 \!\! \pmod{4}$ the coefficients are partial sums of odd or even indexed $b_i$,  depending on 
the parity of $k$.

\begin{acknowledgment}
I would like to thank the anonymous referee who suggested an alternate proof of Lemma \ref{lemma5} and whose comments
greatly improved the presentation of this paper.
\end{acknowledgment}

\bibliographystyle{plain}

\end{document}